\newtheorem{theo}{Theorem}[section]
\newtheorem{lem}[theo]{Lemma}
\newtheorem{propo}[theo]{Proposition}
\theoremstyle{definition}
\theoremstyle{remark}
\newtheorem{rem}[theo]{Remark}
\newtheorem{ex}[theo]{Example}
\def\Z{\mathbb{Z}}
\def\C{\mathbb{C}}
\def\N{\mathbb{N}}
\def\Q{\mathbb{Q}}
\def\n{\eta}
\def\s{\sigma}
\def\a{\alpha}
\def\e{\varepsilon}
\def\f{\varphi}
\def\b{\beta}
\def\n'{\nu}
\def\g{\gamma}
\def\D{\Delta}
\def\P{\Phi}
\def\px {\partial_{z}}
\def\pt {\partial_{t}}
\def\dt {\Delta_{t}}
\begin{document}
\sloppy
\title{Computing the Galois group of some parameterized linear differential equation of order two}

\author{Thomas Dreyfus}
\address{Université Paris Diderot - Institut de Mathématiques de Jussieu,}
\curraddr{4, place Jussieu 75005 Paris.}
\email{tdreyfus@math.jussieu.fr.}
\thanks{Work partially supported by ANR-06-JCJC-0028 and NFS CCF-0952591}
\subjclass[2010]{Primary 34M15, 12H20, 34M03.}


\date{September 13, 2013.}


\begin{abstract}
 We extend Kovacic's algorithm to compute the differential Galois group of some second order parameterized linear differential equation. In the case where no Liouvillian solutions could be found, we give a necessary and sufficient condition for the integrability of the system. We give various examples of computation.
\end{abstract} 

\maketitle
\setcounter{tocdepth}{1}
\tableofcontents
\pagebreak[3]
\section*{Introduction}

Let us consider the linear differential equation
$$
 \begin{pmatrix}
\px Y(z)   \\ 
\px^{2} Y(z)
\end{pmatrix}=
\begin{pmatrix}
0 & 1 \\ 
r(z)&0
                              \end{pmatrix}\begin{pmatrix}
Y(z)   \\ 
\px Y(z)
\end{pmatrix},
$$
where $r(z)$ is a rational function with coefficients in $\C$. We have a Galois theory for this type of equation; see \cite{VdPS}. In particular, we can associate to this equation a group $H$, which we call the differential Galois group, that measures the algebraic relations of the solutions. In this case, this group can be viewed as a linear algebraic subgroup of $\rm SL_{2}(\C)$.   
 Kovacic in \cite{Kov} (see also \cite{VdP}) uses the classification  of the linear algebraic subgroup of $\rm SL_{2}(\C)$ to obtain an algorithm that determines the Liouvillian solutions, which are the solutions that involve exponentials, indefinite integrals and solutions of polynomial equations.
 In particular, four cases happen:

\begin{enumerate}
\item $H$ is conjugated to a subgroup of $B=\left\{ \begin{pmatrix}
a&b \\ 
0&a^{-1}
\end{pmatrix}\hbox{, where } a\in \C^{*},b\in \C \right\}$ and there exists a Liouvillian solution of the form  $e^{\int_{0}^{z} f(u) du}$, with $f(z)\in \C(z)$.
\item  $H$ is conjugated to a subgroup of $$D_{\infty}=\left\{ \begin{pmatrix}
a &0 \\ 
0&a^{-1}
\end{pmatrix} \bigcup \begin{pmatrix}
0 &b^{-1} \\ 
-b&0
\end{pmatrix}\hbox{, where } a,b\in \C^{*} \right\}$$ and there exists a Liouvillian solution of the form  $e^{\int_{0}^{z} f(u) du}$, where $f(z)$ is algebraic over  $\C(z)$ of degree two  and $f(z) \notin \C(z)$.
\item  $H$ is finite and all the solutions are algebraic over $\C(z)$.
\item $H=\rm SL_{2}(\C)$ and there are no Liouvillian solutions.
\end{enumerate}
Various improvements of this algorithm have been made. See for example \cite{DLR,HVdP,UW,Z}. The case where $H$ is finite has been totally solved in \cite{SU1,SU2}; see also \cite{HW}.
\par Let $\{ \partial_{0},\partial_{1},\dots,\partial_{n}\} $ be a set of $n+1$ commuting derivations. In this article, we are interested in the parameterized linear differential equation of the form
$$
 \begin{pmatrix}
\partial_{0} Y   \\ 
\partial_{0}^{2} Y
\end{pmatrix}=
\begin{pmatrix}
0 & 1 \\ 
r&0
                              \end{pmatrix}\begin{pmatrix}
Y  \\ 
\partial_{0} Y
\end{pmatrix},$$
where $r$ belongs in a suitable $ (\partial_{0},\partial_{1},\dots,\partial_{n})$-differential field. The derivations $\partial_{1},\dots,\partial_{n}$ should be thought of as derivations with respect to the parameters. We will denote by  $C$ its field of the $\partial_{0}$-constants.  In \cite{L} and \cite{CS,HS}, the authors develop a Galois theory for the parameterized linear differential equations. They define a  parameterized differential Galois group that measures the ($\partial_{1},\dots,\partial_{n}$)-differential and algebraic relations between the solutions; see $\S \ref{2sec2}$. This group can be seen as a differential group in the sense of Kolchin: this is a group of matrices whose entries lie in the differential field $C$ and satisfy a set of polynomial differential equations with coefficients in $C$. In the case of the equation $\partial_{0}^{2} Y=rY$, the Galois group will be a linear differential algebraic subgroup of $\rm SL_{2}(C)$. The goal of this paper is to extend the algorithm from Kovacic and compute the parameterized differential Galois group of the equation $\partial_{0}^{2} Y=rY$.\\\par
The article is presented as follows. In the first section, we recall some basic facts about parameterized differential Galois theory. This theory needs to use a field of the $\partial_{0}$-constants which is $(\partial_{1},\dots,\partial_{n})$-differentially closed (see \cite{CS}, Definition 3.2). We will make a stronger assumption on the field of the $\partial_{0}$-constants $C$: we will assume that $C$ is a universal $(\partial_{1},\dots,\partial_{n})$-field (see $\S \ref{2sec2}$). We use this assumption on $C$ because a field  $(\partial_{1},\dots,\partial_{n})$-differentially closed is an abstract field which has no interpretation as a field of functions. We will see in $\S 2$ that a result of Seidenberg will allow us to identify the elements of the universal $(\partial_{1},\dots,\partial_{n})$-field $C$ which we will consider as meromorphic functions on a polydisc $D$ of $\C^{n}$.
\par In the second section, we recall the result of Seidenberg which implies that the parameterized differential Galois group can be seen as a linear differential algebraic subgroup defined over a field of meromorphic functions on a polydisc $D$ of $\C^{n}$.  Since the original algorithm from \cite{Kov} can be applied if we consider rational functions having coefficients in an algebraically closed field, we apply Kovacic's algorithm for the field of rational functions having coefficients in $C$. We obtain Liouvillian solutions that can be interpreted as meromorphic functions. Then we explain how to compute the Galois group in the four cases of Kovacic's algorithm. In the case number $4$, the Galois group is Zariski dense in $\rm SL_{2}$. We recall the definition of integrable systems and the link with integrable systems and equations with a Galois group that is Zariski dense in $\rm SL_{2}$. We decrease the number of integrability conditions by showing that this is enough to check the integrability condition for the pairs of derivations $(\px,\partial)$, where $\partial$ belongs in the vectorial space spanned by the derivations with respect to the parameters. Then, we obtain an effective way to compute the Galois group in the case number $4$, see Proposition~\ref{2propo2}. We summarize the results of the section in Theorem \ref{2theo}.
\par In the last section we give various examples of computation.\\
\par After the submission of this paper, the authors in \cite{GO} has generalized Proposition~\ref{2propo2} for equations with order more than two. Moreover, Carlos E Arreche has proved some other results in touch with parameterized Kovacic's algorithm. See \cite{Ar}.\\
\subsection*{Acknowledgments}
This article has been prepared during my thesis, which is supported by the region Île-de-France. I would like to thank my advisor, Lucia Di Vizio for sharing with me a part of her knowledge. I also want to thank Jacques-Arthur Weil and William Sit, for pointing me out some mistakes and inaccuracies in this paper.
Thanks to the anonymous referee for the pertinent remarks.
\pagebreak[3]
\section{Parameterized differential Galois theory}\label{2sec2}

Let $K$ be a differential field equipped with $n+1$ commuting derivations $ \partial_{0},\dots,\partial_{n}$ and let $\D=\{ \partial_{1},\dots,\partial_{n} \}$. We will assume that its field of the $\partial_{0}$-constants $C$ is a universal $(\D)$-field with characteristic $0$; that is, a $(\D)$-field such that for any $(\D)$-field $C_{0}\subset C$, $(\D)$-finitely generated over $\Q$, and any $(\D)$-finitely generated extension $C_{1}$ of $C_{0}$, there is a $(\D)$-differential $C_{0}$-isomorphism of $C_{1}$ into $C$.  See \cite{Kol73}, Chapter $3$, Section $7$, for more details. In particular, $C$ is $(\D)$-differentially closed. 
In this section, we will recall the result from \cite{CS} of Galois theory for the parameterized linear differential equation of the form
\begin{equation}\label{2eq1}
 \begin{pmatrix}
\partial_{0} Y   \\ 
\partial_{0}^{2} Y
\end{pmatrix}=
\begin{pmatrix}
0 & 1 \\ 
r&0
                              \end{pmatrix}\begin{pmatrix}
Y  \\ 
\partial_{0} Y
\end{pmatrix},
\end{equation}
with $r\in K$.  
A parameterized Picard-Vessiot extension of the equation (\ref{2eq1}) on $K$  is a ($\partial_{0},\dots,\partial_{n}$)-differential field extension $\mathcal{K}|K$ generated over $K$ by the entries of an invertible solution matrix (we will call it a fundamental solution) and such that the field of the $\partial_{0}$-constants of $\mathcal{K}$ is equal to $C$. 
We can apply \cite{CS}, Theorem 9.5, for the equation (\ref{2eq1}), and deduce the existence and the uniqueness up to $(\partial_{0},\dots,\partial_{n})$-differential isomorphism of the parameterized Picard-Vessiot extension $\mathcal{K}|K$. If $\D=\varnothing$, we recover the usual unparameterized Picard-Vessiot extension.
\par
The parameterized (resp. unparameterized) differential Galois group $G$ (resp. $H$) is the group of field automorphisms of the parameterized Picard-Vessiot extension (resp. the unparameterized Picard-Vessiot extension) of the equation (\ref{2eq1}),  which induces the identity on $K$ and commutes with all the derivations (resp. with the derivation $\partial_{0}$). Let $U$ be a fundamental solution.
In the  unparameterized case, 
$$\{ U^{-1}\f(U), \f \in H \} $$
 is a linear algebraic subgroup of $\rm GL_{2}(C)$. In the parameterized case we find that
$$\{ U^{-1}\f(U), \f \in G \} $$
is a linear differential algebraic subgroup, that is, a subgroup of $\rm GL_{2}(C)$ which is the zero of a set of $(\D)$-differential polynomials in $4$ variables. See \cite{CS}, Theorem 9.10, for a proof. Any other fundamental solution yields another differential algebraic subgroup of $\rm GL_{2}(C)$ which are all conjugated over $\rm GL_{2}(C)$. We will identify $G$ (resp. $H$) with a linear differential algebraic subgroup of $\rm GL_{2}(C)$ (resp. with a linear algebraic subgroup of $\rm GL_{2}(C)$) for a chosen fundamental solution. The next lemma is a classical result.
\pagebreak[3]
\begin{lem}[\cite{Kov}, Section 1.3]\label{2lem2}
$G \subset \rm SL_{2}(C)$.
\end{lem}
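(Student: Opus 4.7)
The plan is to run the classical Wronskian argument, which goes through unchanged in the parameterized setting because $G$ acts by automorphisms commuting with $\partial_{0}$.

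First I would fix the chosen fundamental solution matrix $U$ of (\ref{2eq1}), whose columns are two $\partial_{0}$-linearly independent solutions together with their first $\partial_{0}$-derivatives. The key observation is that the determinant $W=\det(U)$, i.e.\ the Wronskian of the two solutions, satisfies a scalar equation: differentiating with respect to $\partial_{0}$ and using that the companion matrix
$$
A=\begin{pmatrix} 0 & 1 \\ r & 0 \end{pmatrix}
$$
has trace zero, one obtains $\partial_{0}W=\mathrm{tr}(A)\,W=0$. Hence $W$ lies in the field of $\partial_{0}$-constants of $\mathcal{K}$, which by definition of a parameterized Picard--Vessiot extension is exactly $C$. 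Moreover $W\neq 0$ because $U$ is invertible, so $W\in C^{*}$.

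Next I would use this to constrain the image of a Galois element. By hypothesis $C$ is the field of $\partial_{0}$-constants of $K$, hence $C\subset K$, and since every $\varphi\in G$ restricts to the identity on $K$ it must fix $W$. Writing $\varphi(U)=U M_{\varphi}$ with $M_{\varphi}=U^{-1}\varphi(U)\in\mathrm{GL}_{2}(C)$, multiplicativity of the determinant gives
$$
W=\varphi(W)=\varphi(\det U)=\det(\varphi(U))=\det(U)\det(M_{\varphi})=W\det(M_{\varphi}).
$$
Since $W\in C^{*}$ we may divide to conclude $\det(M_{\varphi})=1$, i.e.\ $M_{\varphi}\in\mathrm{SL}_{2}(C)$. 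Doing this for every $\varphi$ yields $G\subset\mathrm{SL}_{2}(C)$ under the identification fixed before the statement.

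There is no real obstacle here; the only point that deserves a moment of care is the identification of $W$ as an element of $C$ (and not merely of the larger constant field of some extension), which is why the proof depends on the defining property of a PPV extension that its $\partial_{0}$-constants coincide with those of the base field $K$. Everything else is formal.
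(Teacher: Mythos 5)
Your proof is correct and is exactly the classical Wronskian/Abel argument that the paper invokes by citing Kovacic, Section 1.3, without reproducing it: the trace of the companion matrix vanishes, so $W=\det U$ is a $\partial_{0}$-constant, hence lies in $C\subset K$ by the defining property of the PPV extension, is fixed by every $\varphi\in G$, and forces $\det(U^{-1}\varphi(U))=1$. You correctly identified the one point needing care in the parameterized setting (that the $\partial_{0}$-constants of $\mathcal{K}$ are still $C$), so nothing is missing.
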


\pagebreak
\section{Computation of the parameterized differential Galois group}\label{2sec3}

Until the end of the paper, $C$ denotes a universal $(\D)$-field equipped with $n$ commuting derivations. Let $C(z)$ be the $(\partial_{z},\D)$-differential field of rational functions in the indeterminate $z$, with coefficients in $C$, where $z$ is a $(\D)$-constant with $\px z =1$, $C$ is the field of the $\px$-constants and such that $\px$ commutes with all the derivations. 
Let us consider the parameterized linear differential equation
\begin{equation}\label{2eq2}
 \begin{pmatrix}
\px Y(z)   \\ 
\px^{2} Y(z)
\end{pmatrix}=
\begin{pmatrix}
0 & 1 \\ 
r(z)&0
                              \end{pmatrix}\begin{pmatrix}
Y(z)   \\ 
\px Y(z)
\end{pmatrix},
\end{equation}
with $r(z)\in C(z)$. We want to apply Kovacic's algorithm for the parameterized linear differential equation (\ref{2eq2}). 
Let $G\subset \rm SL_{2}(C)$ be the parameterized  differential Galois group.
The algorithm from \cite{Kov} can be applied if the field of the $\px$-constants is algebraically closed, which is the case here.  The problem is that $C$ is an abstract field which is not very convenient for the computations. In fact we have an interpretation of the elements of $C$ as meromorphic functions.
 Let $C_{1}$ be the $(\D)$-differential field generated over $\Q$ by the $z$-coefficients of $r(z)$. Using the following result of Seidenberg  (see \cite{Sei58,Sei69}) with $K_{0}=\Q$ and $K_{1}=C_{1}$, we find the existence of a polydisc $D$ of $\C^{n}$ such that the $z$-coefficients of $r(z)$ can be considered as meromorphic functions on $D$. 
\pagebreak[3]
\begin{theo}[Seidenberg]
Let $\Q\subset K_{0} \subset K_{1}$ be finitely generated $(\D)$-differential extensions of $\Q$ and assume that $K_{0}$ consists of meromorphic functions on some domain $\Omega$ of $\C^{n}$. Then $K_{1}$ is isomorphic to the field $K_{1}^{*}$ of meromorphic functions on $\Omega_{1}\subset \Omega$ such that $K_{0}|_{\Omega_{1}}\subset K_{1}^{*}$, and the derivations in $\D$ can be identified with the derivations with respect to the coordinates on $\Omega_{1}$.
\end{theo}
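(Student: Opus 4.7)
The plan is to reduce to the case of a single $(\D)$-differential generator and then construct a meromorphic realization of that generator via the Cauchy--Kowalevski theorem.

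By induction on the number of generators of $K_1$ as a $(\D)$-differential extension of $K_0$, it suffices to treat the case $K_1 = K_0\langle \eta \rangle$; the general statement then follows by iterating the argument and intersecting finitely many polydiscs. Fixing such a $\eta$, I would consider the prime $(\D)$-differential ideal $\mathfrak{p} \subset K_0\{y\}$ defined as the kernel of the substitution map $y \mapsto \eta$. Ritt--Kolchin theory furnishes a characteristic set $\{A_1,\ldots,A_s\}$ of $\mathfrak{p}$ with respect to an orderly ranking on the derivatives of $y$: the leading derivatives of the $A_i$ split the derivatives of $y$ into ``principal'' and ``parametric'' ones, and the associated separants and initials lie outside $\mathfrak{p}$.

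I would then choose a base point $p \in \Omega$ at which every $K_0$-coefficient of the $A_i$, together with the associated separants and initials, is holomorphic, and at which the separants and initials are nonzero; this is possible after shrinking $\Omega$, since those elements are all nonzero meromorphic functions. Prescribing suitable analytic data for the parametric derivatives at $p$ and solving the compatible system $A_i = 0$ via the Cauchy--Kowalevski theorem then yields a holomorphic function $\eta^*$ on a sufficiently small polydisc $\Omega_1 \ni p$. The assignment $\eta \mapsto \eta^*$ is designed to extend uniquely to a $(\D)$-differential $K_0$-isomorphism from $K_1$ onto $K_1^* := K_0|_{\Omega_1}\langle \eta^* \rangle \subset \mathcal{M}(\Omega_1)$, under which the abstract derivations in $\D$ are transported to the coordinate partial derivatives on $\Omega_1$.

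The principal obstacle is verifying that $\eta^*$ satisfies exactly the relations in $\mathfrak{p}$ and no additional ones. The argument is that for each $B \in K_0\{y\} \setminus \mathfrak{p}$, the locus of Cauchy data producing an $\eta^*$ with $B(\eta^*) = 0$ is a proper analytic subvariety of the finite-dimensional parameter space of admissible initial data at $p$; since $K_0$ is finitely generated over $\Q$ and hence countable, a Baire-category argument allows one to select data avoiding all these subvarieties simultaneously. In the $(\D)$-transcendental case $\mathfrak{p} = (0)$, this step becomes essentially trivial: one takes $\eta^*$ to be any $(\D)$-differentially transcendental meromorphic function over $K_0|_{\Omega_1}$, whose existence follows from cardinality considerations since $K_0|_{\Omega_1}$ is countable while $\mathcal{M}(\Omega_1)$ is not.
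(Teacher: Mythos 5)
The paper does not prove this statement: it is quoted as a known theorem of Seidenberg and supported only by the citations \cite{Sei58,Sei69}, so there is no in-paper argument to compare yours against. Your sketch does follow the general strategy of Seidenberg's original proof (reduce to one differential generator, take a characteristic set of the prime differential ideal of relations, realize a generic zero analytically by prescribing initial data for the parametric derivatives), but as written it has three concrete gaps. First, Cauchy--Kowalevski is not the right existence theorem here: a characteristic set of a prime $(\D)$-ideal in $n>1$ commuting derivations is not a system in Cauchy--Kowalevski normal form with a single distinguished variable; what is needed is Riquier's existence theorem for passive orthonomic systems (the coherence/autoreducedness of the characteristic set is exactly what guarantees passivity, and this is the analytic input Seidenberg actually uses). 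Second, your genericity argument assumes the space of admissible initial data at $p$ is finite-dimensional; for partial differential systems the parametric derivatives are in general infinite in number, so the initial data form an infinite-dimensional family. The argument can be repaired --- reduce each $B\notin\mathfrak{p}$ modulo the characteristic set to a nonzero polynomial in finitely many parametric derivatives, and choose the countably many Taylor coefficients of the initial data successively so as to avoid the countably many nonzero polynomial conditions --- but that is a different (and essentially Seidenberg's) argument, not a Baire-category argument on a finite-dimensional variety.

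Third, in the $(\D)$-transcendental case, the bare cardinality comparison between $K_{0}|_{\Omega_{1}}$ and $\mathcal{M}(\Omega_{1})$ does not by itself produce a differentially transcendental element: the set of functions that are $(\D)$-differentially algebraic over a countable field is a countable union of sets each of which may well have the cardinality of the continuum, so it is not excluded on cardinality grounds alone that this union is all of $\mathcal{M}(\Omega_{1})$. Here too the correct move is to choose the Taylor coefficients of $\eta^{*}$ at $p$ generically, dodging the countably many nontrivial polynomial conditions imposed by the countably many nonzero differential polynomials over $K_{0}$. With these repairs your outline becomes an accurate account of Seidenberg's proof; as stated it is a plausible but incomplete reconstruction of a result the paper itself only cites.
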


Let $(\mathcal{M}_{D}, \partial_{t_{1}},\dots,\partial_{t_{n}})$ denotes the $\D_{t}=\{ \partial_{t_{1}},\dots,\partial_{t_{n}}\}$-differential field of meromorphic functions on $D$, a polydisc of $\C^{n}$. Let $t=(t_{1},\dots,t_{n})$. The discussion above tell us that the $r(z)$ of the equation (\ref{2eq2}) can be identified with $r(z,t)$, an element of $\mathcal{M}_{D}(z)$\footnote{$\mathcal{M}_{D}(z)$ denotes the $(\px,\dt)$-differential field of rational functions with indeterminate $z$ and with coefficients in $\mathcal{M}_{D}$ such that $\px z=1$, $z$ is a $(\dt)$-constant and the field $\mathcal{M}_{D}$ is the field of the $\px$-constants.}, where $D$ is a polydisc of $\C^{n}$. 
We will consider the parameterized linear differential equation
$$
 \begin{pmatrix}
\px Y(z,t)   \\ 
\px^{2} Y(z,t)
\end{pmatrix}=
\begin{pmatrix}
0 & 1 \\ 
r(z,t)&0
                              \end{pmatrix}\begin{pmatrix}
Y(z,t)   \\ 
\px Y(z,t)
\end{pmatrix},$$
with $r(z,t)\in \mathcal{M}_{D}(z)$. The group $G$ is defined by a finite number of $(\D)$-differential polynomials. Again, using the result of Seidenberg with the $(\D)$-differential field generated over $\Q$ by the coefficients of the $(\D)$-differential polynomials that define $G$ and the $z$-coefficients of $r(z)$, we deduce that $G$ can be seen as a linear differential algebraic subgroup of $\rm SL_{2}(\mathcal{M}_{D})$. Again using the result of Seidenberg, we remark that after shrinking $D$, we can assume that if $G$ is conjugated over  $\rm SL_{2}(C)$ to $Q$, we can identify $Q$ and $G$ as linear differential algebraic subgroups of $\rm SL_{2}(\mathcal{M}_{D})$, and  they are conjugated over $\rm SL_{2}(\mathcal{M}_{D})$. 
Furthermore, we obtain that the Liouvillian solutions found are defined over the algebraic closure of $\mathcal{M}_{D}(z)$. We will compute $G$ as a linear differential algebraic subgroup of $\rm SL_{2}(\mathcal{M}_{D})$.
We recall that we have four cases to consider:
\begin{enumerate}
\item There exists a Liouvillian solution of the form
$ g(z,t)=e^{\int_{0}^{z} f(u,t) du}$,
 with $f(z,t)$ belongs to $\mathcal{M}_{D}(z)$.
\item There exists a Liouvillian solution of the form
$ g(z,t)=e^{\int_{0}^{z} f(u,t) du}$,
 where $f(z,t)$ is algebraic over  $\mathcal{M}_{D}(z)$ of degree two  and $f(z,t) \notin \mathcal{M}_{D}(z)$.
\item All the solutions are algebraic over $\mathcal{M}_{D}(z)$.
\item There are no Liouvillian solutions.
\end{enumerate}
These correspond to the four cases recalled in the introduction. Proposition 6.26 of \cite{HS} says that if we take the same fundamental solution, the Zariski closure of $G$ is the unparameterized differential Galois group. This means that in each case we are looking at the Zariski dense subgroups of the group given by usual Kovacic's algorithm. 
\pagebreak[3]
\subsection{}
We start with the case number $1$. There exists a Liouvillian solution of the form:
$$ g(z,t)=e^{\int_{0}^{z} f(u,t) du},$$
 with $f(z,t)\in \mathcal{M}_{D}(z)$. The action of $G$ on the solution $g(z,t)$ can be computed with the following lemma: 
\pagebreak[3]
\begin{lem}\label{2lem1}
Let $\s \in G$. 
\begin{trivlist}
\item  (1) Let $\a(t)\in \mathcal{M}_{D}$ and $p,q \in \N$, such that $GCD(p,q)=1$.  Then there exists $k\in \N$ such that ${\s((z-\a (t))^{p/q})=e^{\frac{2ik\pi}{q}}(z-\a (t))^{p/q}}$.
\item  (2) Let $\a(t),\b(t)\in \mathcal{M}_{D}$ and $\b (t) \notin \Q$. Then there exists $a\in \C$ and $c\in \C^{*}$ such that $\s((z-\a (t))^{\b (t)})=ce^{a\b (t)}(z-\a (t))^{\b (t)}$.
\item  (3) Let  $Q(z,t)\in \mathcal{M}_{D}(z)$. Then there exists $a\in \C^{*}$ such that $\s(e^{Q(z,t)})=ae^{Q(z,t)}$.
\end{trivlist}
\end{lem}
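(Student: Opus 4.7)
The overall plan is to exploit, for each $\s \in G$, the two defining properties: $\s$ fixes the base field $K := \mathcal{M}_D(z)$ and commutes with every derivation $\px, \partial_{t_1}, \dots, \partial_{t_n}$. Setting $v := \s(w)/w$ for the relevant $w$ in each part, one immediately has $\partial v / v = \s(\partial w / w) - \partial w / w = 0$ for any derivation $\partial$ such that $\partial w / w \in K$. The three parts differ only in which logarithmic derivatives lie in the base.

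Part (1) requires only the $q$-th power trick: $w^q = (z-\a)^p \in K$, so $\s(w)^q = w^q$, forcing $\s(w)/w$ to be a $q$-th root of unity, i.e.\ of the form $e^{2ik\pi/q}$ for some $k \in \N$. Part (3) uses the logarithmic-derivative observation with respect to every derivation at once: for $w = e^Q$ with $Q \in \mathcal{M}_D(z)$, the identities $\px w / w = \px Q$ and $\partial_{t_j} w / w = \partial_{t_j} Q$ both lie in $K$, so $v$ is annihilated by every derivation and hence belongs to the total constant field, which is $\C^*$.

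Part (2) is the delicate one. For $w = (z-\a)^{\b}$, the $\px$-argument alone gives $\px v = 0$, so $v \in \mathcal{M}_D$. To refine this, I plan to introduce $M := \log(z-\a)$, which satisfies $\px M = 1/(z-\a) \in K$ and $\partial_{t_j} M = -\partial_{t_j}\a/(z-\a) \in K$. When $\b$ is genuinely non-constant in $t$, pick $j$ with $\partial_{t_j}\b \neq 0$; the relation $\partial_{t_j} w / w = (\partial_{t_j}\b) M - \b\,\partial_{t_j}\a/(z-\a)$, combined with $w,\partial_{t_j} w \in \mathcal{K}$, lets me solve for $M$ as an element of $\mathcal{K}$. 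The argument of part (3) applied to $M$ then forces $\s(M) - M$ to be a total constant $d_{\s} \in \C$. Since $w$ coincides with $e^{\b M}$ up to a multiplicative branch-of-logarithm constant, this yields $\s(w) = e^{\b d_{\s}} w$, i.e.\ the required form with $(c,a) = (1,d_{\s})$. If instead $\b$ is a complex constant in $\C \setminus \Q$, then $\partial_{t_j} w / w = -\b\,\partial_{t_j}\a /(z-\a) \in K$ and the direct (3)-style argument returns $v \in \C^*$, matching the claim with $a = 0$.

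The main obstacle I anticipate is making the introduction of $M$ rigorous: either $M \in \mathcal{K}$, or $\s$ must be extended consistently to $\mathcal{K}\langle M\rangle$. The trick of solving $\partial_{t_j} w/w$ for $M$ when $\b$ is non-constant handles the first possibility, and the multiplicative constant $c$ in the statement absorbs the branch ambiguity inherent in identifying $w$ with $e^{\b M}$.
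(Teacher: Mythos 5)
Your proposal is correct and follows essentially the same route as the paper: part (1) by raising to the $q$-th power, parts (2) and (3) by comparing logarithmic derivatives with respect to $\partial_z$ and the $\partial_{t_j}$ and using that $\sigma$ commutes with all derivations and fixes $\mathcal{M}_D(z)$. Your observation that $\log(z-\alpha(t))$ can be solved for inside the Picard--Vessiot field when some $\partial_{t_j}\beta\neq 0$ is a welcome precision the paper leaves implicit; the only phrasing to tighten is the last step of part (2), which should be carried out by integrating $\partial_{t_j}v/v=d_{\sigma}\,\partial_{t_j}\beta$ for $v=\sigma(w)/w$ (this is exactly where the constant $c$ arises) rather than by applying $\sigma$ inside an exponential.
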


\begin{proof}
\begin{trivlist}
\item  (1) We use the fact the elements of $G$ are fields automorphisms that leave $\mathcal{M}_{D}$ invariant.
\item  (2) A computation shows that $$\partial_{t_{i}}(z-\a (t))^{\b (t)}=\left[\log(z-\a(t))\partial_{t_{i}} \b(t)-\frac{\partial_{t_{i}}\a(t)\b(t)}{z-\a(t)}\right](z-\a (t))^{\b (t)}.$$ The fact that $\s$ commutes with all the derivations implies the existence of $a\in \C$ and $f(t)\in \mathcal{M}_{D}$ such that 
$$\s \Big(\log(z-\a(t))\Big)=\log(z-\a(t))+a \hbox{ and }\s\Big((z-\a (t))^{\b (t)}\Big)=f(t)(z-\a (t))^{\b (t)}.$$ 
Since $\partial_{t_{i}}\s =\s\partial_{t_{i}}$, we obtain that
$$
\begin{array}{ll}
&\Big[\log(z-\a(t))\partial_{t_{i}} \b(t)+a\partial_{t_{i}} \b(t)-\frac{\partial_{t_{i}}\a(t)\b(t)}{z-\a(t)}\Big]f(t)\\\\
=&\partial_{t_{i}}f(t)+f(t)\Big[\log(z-\a(t))\partial_{t_{i}} \b(t)-\frac{\partial_{t_{i}}\a(t)\b(t)}{z-\a(t)}\Big].
\end{array}
 $$
Finally, $f(t)$ satisfies the parameterized linear differential equation
$$\partial_{t_{i}}\left(\frac{\partial_{t_{i}}f(t)}{f(t)a\partial_{t_{i}}\b(t)} \right)=0.$$
This means that $\frac{\partial_{t_{i}}f(t)}{f(t)a\partial_{t_{i}}\b(t)} =c\in \C^{*}$ and $\log f(t)=a\b(t)+\log(c)$. Then we deduce that $f(t)=ce^{a\b (t)}$.
\item  (3) We use the fact that $$\partial_{t_{i}}\s \left(e^{Q(z,t)}\right)=\s \left(\partial_{t_{i}}\left(e^{Q(z,t)}\right)\right)=\s \left(\partial_{t_{i}}\left(Q(z,t)\right)e^{Q(z,t)}\right)= \partial_{t_{i}}Q(z,t)\s \left(e^{Q(z,t)}\right).$$
The equation $\partial_{t_{i}}\s \left(e^{Q(z,t)}\right)=\partial_{t_{i}}Q(z,t)\s \left(e^{Q(z,t)}\right)$ admits $\s(e^{Q(z,t)})=ae^{Q(z,t)}$ with ${a\in \C^{*}}$ as a solution.
\end{trivlist}
\end{proof}

We deduce that the matrices of $G$ are upper triangular. We will denote by ${G_{m}\backsimeq \rm GL_{1}(\mathcal{M}_{D})}$ the multiplicative group. The proof of the following proposition is inspired by the proof of \cite{Sit}, Theorem 1.4. Let $p:G\rightarrow G_{m}$ that sends $\begin{pmatrix}
m(t)& a(t)  \\ 
0&m^{-1}(t)
\end{pmatrix}$ on $m(t)$. Let $M$ be the image of $p$ and $A\subset\mathcal{M}_{D}$ such that $$\left\{ \begin{pmatrix}
1& a(t) \\ 
0&1
\end{pmatrix}\hbox{, where } a(t)\in A \right\}$$ is the kernel of $p$. We have already computed $M$ with Lemma \ref{2lem1}. For $m(t)\in M$, let $\Gamma_{m(t)}$ be the set of $\g_{m(t)}\in \mathcal{M}_{D}$ such that 
$ \begin{pmatrix}
m(t)& \g_{m(t)} \\ 
0&m(t)^{-1}
\end{pmatrix}\in G$. We will identify $\C^{*}$ with the field of constants elements of $\mathcal{M}_{D}$. If $\C^{*} \not \subset M$, because of Lemma \ref{2lem1}, $g(z,t)\in \mathcal{M}_{D}(z)$, and we can compute explicitly $g(z,t)\displaystyle \int_{u=0}^{z} g(u,t)^{-2}du,$ which is another solution. We obtain explicitly a fundamental solution and we can compute $G$. The next proposition explains how to compute $G$ when $\C^{*}\subset M$.
\pagebreak[3]
\begin{propo}\label{2propo3}
Let us keep the same notations. Assume that $\C^{*}\subset M$. Then $G$ is conjugated to 
$$ \left\{ \begin{pmatrix}
m(t)& a(t) \\ 
0&m(t)^{-1}
\end{pmatrix}\hbox{, where } m(t)\in M, a(t)\in A \right\}.$$
\end{propo}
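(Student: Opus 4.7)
Since we are in Case~1, Lemma~\ref{2lem1} implies that $G$ is contained in the Borel subgroup of upper triangular matrices of $\mathrm{SL}_{2}(\mathcal{M}_{D})$, so $p$ is a morphism of differential algebraic groups whose kernel corresponds to the additive subgroup $A\subset\mathcal{M}_{D}$. For each $m\in M$, matrix multiplication shows that the fiber $p^{-1}(m)$ is a coset $\g_{m}+m^{-1}A$, because the quotient $\begin{pmatrix}m&\g\\ 0&m^{-1}\end{pmatrix}\begin{pmatrix}m&\g'\\ 0&m^{-1}\end{pmatrix}^{-1}$ equals $\begin{pmatrix}1&m(\g-\g')\\ 0&1\end{pmatrix}\in\ker p$. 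The proposition thus amounts to showing that, after conjugation, $\g_{m}\in A$ and $mA=A$.

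\noindent\textbf{Normalisation and commutator computation.} Using $\C^{*}\subset M$, pick $c\in\C^{*}\setminus\{\pm 1\}$ and a lift $\s_{c}=\begin{pmatrix}c&\g_{c}\\ 0&c^{-1}\end{pmatrix}\in G$. Conjugating $G$ by the unipotent matrix $\begin{pmatrix}1&-\g_{c}/(c-c^{-1})\\ 0&1\end{pmatrix}$ (valid since $c^{2}\neq 1$) leaves $A$ invariant and turns $\s_{c}$ into $\mathrm{diag}(c,c^{-1})\in G$. For an arbitrary $\tau=\begin{pmatrix}m&\g_{m}\\ 0&m^{-1}\end{pmatrix}\in G$ one then computes
\[
\s_{c}\tau\s_{c}^{-1}=\begin{pmatrix}m&c^{2}\g_{m}\\ 0&m^{-1}\end{pmatrix},\qquad [\s_{c},\tau]=\begin{pmatrix}1&(c^{2}-1)\,m\g_{m}\\ 0&1\end{pmatrix}\in\ker p,
\]
so $(c^{2}-1)m\g_{m}\in A$. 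Cassidy's classification of differential algebraic subgroups of $\mathbb{G}_{a}$ shows that $A$ is a $\C$-vector space, so division by $c^{2}-1\in\C^{*}$ yields $m\g_{m}\in A$, hence $\g_{m}\in m^{-1}A$ and $\Gamma_{m}=m^{-1}A$. A parallel calculation $\tau\begin{pmatrix}1&a\\ 0&1\end{pmatrix}\tau^{-1}=\begin{pmatrix}1&m^{2}a\\ 0&1\end{pmatrix}$ gives $m^{2}A\subset A$, and by replacing $m$ with $m^{-1}$ one obtains $m^{2}A=A$ for every $m\in M$.

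\noindent\textbf{Conclusion and main obstacle.} It remains to upgrade $m^{2}A=A$ to $mA=A$. By Cassidy's classification of differential algebraic subgroups of $\mathbb{G}_{m}$, $M$ is cut out by linear differential conditions on the logarithmic derivative $m'/m$; since $(m^{1/2})'/m^{1/2}=\tfrac{1}{2}(m'/m)$, every $m\in M$ has a square root in $M$ once $D$ is shrunk so that $m^{1/2}$ is globally meromorphic (which is legitimate for the explicit multipliers described in Lemma~\ref{2lem1}). Hence $mA=(m^{1/2})^{2}A=A$, so $\Gamma_{m}=m^{-1}A=A$ for every $m\in M$, which yields the announced form of $G$. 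The delicate part of the argument is precisely this final upgrade; the preceding steps reduce to routine $2\times 2$ matrix computations combined with a single appeal to Cassidy's classification of subgroups of $\mathbb{G}_{a}$.
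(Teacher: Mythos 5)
Your proof is correct and is essentially the paper's own argument: the same commutator computation with a lift of a constant $c\in\C^{*}\setminus\{\pm 1\}$ (the paper's Lemma \ref{2lem5}), the same use of square roots in $M$ to get $mA=A$ (Lemma \ref{2lem3}), the same coset structure of the fibers (Lemma \ref{2lem4}), and the same unipotent conjugation -- which you perform at the outset to diagonalize $\s_{c}$, whereas the paper performs it at the end via $P=\left(\begin{smallmatrix}1&\b(t)\\0&1\end{smallmatrix}\right)$. The only cosmetic difference is that you divide by $c^{2}-1$ using that $A$ is a $\C$-vector space, while the paper absorbs such constants through Lemma \ref{2lem3}.
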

For the proof of the proposition, we will need the following lemmas.
\pagebreak[3]
\begin{lem}\label{2lem3}
Assume that $\C^{*}\subset M$. Let $m(t)\in M$ and $a(t)\in A$. Then $m(t)a(t)\in A$.
\end{lem}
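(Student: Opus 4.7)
The plan is to first establish $m^{2}a \in A$ by a commutator computation, and then upgrade to $ma \in A$ by using $\C^{*} \subset M$ to show that $M$ is $2$-divisible.

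First I would choose $\sigma = \begin{pmatrix} m & \gamma \\ 0 & m^{-1} \end{pmatrix} \in G$ (such $\sigma$ exists because $m \in M = p(G)$). A direct matrix computation gives
\[
\sigma \begin{pmatrix} 1 & a \\ 0 & 1 \end{pmatrix} \sigma^{-1} \;=\; \begin{pmatrix} 1 & m^{2}a \\ 0 & 1 \end{pmatrix},
\]
and since the left-hand side lies in $G$, we obtain $m^{2}a \in A$, i.e.\ $m^{2}A \subseteq A$ for every $m \in M$. Specialising $m = c \in \C^{*}$ gives $c^{2}A \subseteq A$, and since $\C^{*}$ is $2$-divisible, this upgrades to $cA = A$ for every $c \in \C^{*}$.

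Next I would argue that $M$ itself is $2$-divisible. The group $M$ is a Kolchin-closed subgroup of $\rm GL_{1}(\mathcal{M}_{D})$ containing $\C^{*}$. The logarithmic derivatives $\ell_{i}(m) = \partial_{t_{i}}(m)/m$ combine into a group morphism $\ell \colon M \to \mathcal{M}_{D}^{n}$ with kernel $\C^{*}$; since $\C^{*} \subset M$, one has $M = \ell^{-1}(\ell(M))$. The image $\ell(M)$ is Kolchin-closed in the additive group, and Cassidy's classification of differential algebraic subgroups of the additive group shows that it carries the structure of a $\C$-vector subspace. By the explicit description of $M$ given by Lemma \ref{2lem1}---products of constants and exponentials $c\,e^{a\beta(t)}$---every $m \in M$ admits a meromorphic square root $m^{1/2} \in \mathcal{M}_{D}^{*}$. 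Since $\ell(m^{1/2}) = \frac{1}{2}\ell(m) \in \ell(M)$ by $\C$-linearity, we conclude $m^{1/2} \in M$, hence $M = M^{2}$.

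Combining the two steps: given $m \in M$ and $a \in A$, write $m = m_{1}^{2}$ with $m_{1} \in M$; then $ma = m_{1}^{2}a \in A$ by the commutator identity of the first step. The main obstacle is the second step---showing $M = M^{2}$---which rests on Cassidy's classification to give $\ell(M)$ its $\C$-vector space structure, together with verifying that the explicit generators of $M$ from Lemma \ref{2lem1} admit meromorphic square roots in $\mathcal{M}_{D}^{*}$.
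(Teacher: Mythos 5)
Your proof is correct and follows essentially the same route as the paper: the same conjugation identity (the paper conjugates the unipotent element by a lift of a square root $b(t)$ of $m(t)$, which is literally your computation read with $m$ replaced by $b$), combined with the $2$-divisibility of $M$. The only difference is that you spell out the $2$-divisibility of $M$ in detail via logarithmic derivatives and Cassidy's classification, whereas the paper simply asserts it as a consequence of Lemma \ref{2lem1}.
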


\begin{proof}
With Lemma \ref{2lem1}, we obtain that for all $m(t)\in M$, there exists $b(t)\in M$ such that $b(t)^{2}=m(t)$. 
Let $m(t)\in M$, $b(t)^{2}=m(t)$, $\g_{b(t)}\in \Gamma_{b(t)}$,  and $a(t)\in A$. The computation
$$\begin{pmatrix}
b(t)&\g_{b(t)}  \\ 
0&b(t)^{-1}
\end{pmatrix}
\begin{pmatrix}
1& a(t)  \\ 
0&1
\end{pmatrix}
\begin{pmatrix}
b(t)& \g_{b(t)}  \\ 
0&b(t)^{-1}
\end{pmatrix}^{-1}=
\begin{pmatrix}
1& m(t)a(t)  \\ 
0&1
\end{pmatrix}
$$
shows that if $m(t)\in M$ and $a(t)\in A$, then $m(t)a(t)\in A$.
\end{proof}
\pagebreak[3]
\begin{lem}\label{2lem4}
Assume that $\C^{*}\subset M$. Let $m(t)\in M$. Then $\g_{m(t)},\g'_{m(t)}\in \Gamma_{m(t)}$ if and only if ${(\g_{m(t)}-\g'_{m(t)})\in A}$.
\end{lem}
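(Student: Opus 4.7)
The plan is to prove both directions by a direct $2\times 2$ matrix computation in $G$, using Lemma~\ref{2lem3} to absorb factors of $m(t)^{\pm 1}$ when moving in and out of $A$.

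For the forward implication, I would assume $\g_{m(t)},\g'_{m(t)}\in \Gamma_{m(t)}$, so that the two matrices
$$M_{\g}:=\begin{pmatrix} m(t) & \g_{m(t)} \\ 0 & m(t)^{-1}\end{pmatrix},\qquad M_{\g'}:=\begin{pmatrix} m(t) & \g'_{m(t)} \\ 0 & m(t)^{-1}\end{pmatrix}$$
both lie in $G$. Since $G$ is a group, $M_{\g}M_{\g'}^{-1}\in G$, and a routine $2\times 2$ calculation gives
$$M_{\g}M_{\g'}^{-1}=\begin{pmatrix} 1 & m(t)(\g_{m(t)}-\g'_{m(t)}) \\ 0 & 1\end{pmatrix}.$$
By the definition of $A$ this means $m(t)(\g_{m(t)}-\g'_{m(t)})\in A$. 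Since $M$ is a group we have $m(t)^{-1}\in M$, and applying Lemma~\ref{2lem3} with this element yields $\g_{m(t)}-\g'_{m(t)}\in A$, as required.

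For the reverse implication, I would assume $\g_{m(t)}\in \Gamma_{m(t)}$ and $\g_{m(t)}-\g'_{m(t)}\in A$. Because $A$ is the kernel of the group homomorphism $p$, it is a subgroup of $(\mathcal{M}_D,+)$ and hence closed under negation, giving $\g'_{m(t)}-\g_{m(t)}\in A$. Applying Lemma~\ref{2lem3} to $m(t)^{-1}\in M$ produces $m(t)^{-1}(\g'_{m(t)}-\g_{m(t)})\in A$, so the unipotent matrix with this upper-right entry lies in $G$. Right-multiplying $M_{\g}$ by this unipotent element gives exactly $M_{\g'}$, placing it in $G$ and showing $\g'_{m(t)}\in \Gamma_{m(t)}$.

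There is no substantive obstacle: the proof reduces to matrix multiplication together with two applications of Lemma~\ref{2lem3}. The only minor points requiring a moment of care are the additive-subgroup property of $A$ (so that $\g'-\g\in A$ whenever $\g-\g'\in A$) and the availability of $m(t)^{-1}\in M$ (so that Lemma~\ref{2lem3} applies with the correct multiplier to cancel the factor of $m(t)^{\pm 1}$ produced by the computation).
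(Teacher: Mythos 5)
Your proof is correct and follows essentially the same route as the paper: the forward direction is the identical computation of $M_{\g}M_{\g'}^{-1}$ followed by Lemma~\ref{2lem3} to strip off the factor $m(t)$, and the reverse direction differs only cosmetically (you right-multiply $M_{\g}$ by a unipotent with entry $m(t)^{-1}(\g'_{m(t)}-\g_{m(t)})$, whereas the paper reuses the forward computation with $m(t)(\g_{m(t)}-\g'_{m(t)})\in A$). Your explicit remarks that $A$ is an additive subgroup (as the kernel of $p$) and that $m(t)^{-1}\in M$ are correct and make precise two small points the paper leaves implicit.
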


\begin{proof}
Let $\g_{m(t)},\g'_{m(t)}\in \Gamma_{m(t)}$. The computation
$$\begin{pmatrix}
m(t)& \g_{m(t)}  \\ 
0&m(t)^{-1}
\end{pmatrix}\begin{pmatrix}
m(t)& \g'_{m(t)} \\ 
0&m(t)^{-1}
\end{pmatrix}^{-1}=
\begin{pmatrix}
1& m(t)(\g_{m(t)}-\g'_{m(t)})  \\ 
0&1
\end{pmatrix}
$$
shows that $m(t)(\g_{m(t)}-\g'_{m(t)})\in A$, and then $(\g_{m(t)}-\g'_{m(t)})\in A$, because of Lemma~\ref{2lem3}. Conversely, if $(\g_{m(t)}-\g'_{m(t)})\in A$ and $\g_{m(t)}\in \Gamma_{m(t)}$, then $m(t)(\g_{m(t)}-\g'_{m(t)})\in A$, because of Lemma~\ref{2lem3}. The same computation shows that $\g'_{m(t)}\in \Gamma_{m(t)}$.
\end{proof}
\pagebreak[3]
\begin{lem}\label{2lem5}
Assume that $\C^{*}\subset M$. Let $b\in \C^{*}\setminus \{\pm 1\}$ and $\g_{b}\in \Gamma_{b}$. Let $$\b(t)=b(b^{2}-1)^{-1}\g_{b}.$$ Then, $\b(t)(m(t)-m(t)^{-1})\in \Gamma_{m(t)},$ for all $m (t)\in M$.
\end{lem}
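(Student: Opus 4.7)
The plan is to exploit the fact that the commutator of two elements of $G$ lies in the kernel of the homomorphism $p$, i.e.\ in the subgroup parameterized by $A$. First I pick some $\g_{m(t)}\in \Gamma_{m(t)}$ (which exists by definition of $M$) and form the two matrices
$$M_1=\begin{pmatrix} m(t)& \g_{m(t)} \\ 0 & m(t)^{-1}\end{pmatrix},\qquad M_2=\begin{pmatrix} b& \g_b \\ 0 & b^{-1}\end{pmatrix},$$
both of which lie in $G$. Their commutator $M_1M_2M_1^{-1}M_2^{-1}$ is an element of $G$ whose diagonal is trivial (because the diagonal of each factor lives in the commutative group $M\subset \mathcal{M}_D^\times$), hence its upper-right entry lies in $A$.

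The key step is then a direct $2\times 2$ computation: one checks that
$$M_1M_2M_1^{-1}M_2^{-1}=\begin{pmatrix} 1 & b(m(t)^2-1)\g_b-m(t)(b^2-1)\g_{m(t)} \\ 0 & 1\end{pmatrix},$$
so that $b(m(t)^2-1)\g_b-m(t)(b^2-1)\g_{m(t)}\in A$. Now I invoke Lemma~\ref{2lem3}: since $\C^*\subset M$ and $m(t)^{-1}\in M$, multiplication of an element of $A$ by $\dfrac{1}{m(t)(b^2-1)}\in M$ again produces an element of $A$ (we use here that $b^2-1\neq 0$ because $b\neq \pm 1$). Dividing the above by $m(t)(b^2-1)$ and using that $A$ is an additive group therefore gives
$$\g_{m(t)}-\beta(t)\bigl(m(t)-m(t)^{-1}\bigr)\in A,\qquad \text{where }\beta(t)=b(b^2-1)^{-1}\g_b.$$

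Finally, by Lemma~\ref{2lem4}, the condition $\g_{m(t)}-\beta(t)(m(t)-m(t)^{-1})\in A$ is precisely the statement that $\beta(t)(m(t)-m(t)^{-1})\in \Gamma_{m(t)}$, which is what was to be proved. The whole argument is essentially a matrix calculation combined with the two preceding lemmas; the only mild subtlety is keeping track of why each factor we multiply by lies in $M$ (so that Lemma~\ref{2lem3} applies) rather than in some larger field.
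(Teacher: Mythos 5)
Your proof is correct and follows essentially the same route as the paper: compute the commutator of the two upper-triangular matrices (you take it in the opposite order, which only flips the sign of the upper-right entry and is harmless since $A$ is a group), conclude that $b(m(t)^2-1)\g_b-m(t)(b^2-1)\g_{m(t)}\in A$, rescale by an element of $M$ via Lemma~\ref{2lem3}, and finish with Lemma~\ref{2lem4}. No gaps.
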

\begin{proof}
 Let $m(t)\in M$ and $\g_{m(t)} \in \Gamma_{m(t)}$. The computation
$$\begin{pmatrix}
b& \g_{b}  \\ 
0&b^{-1}
\end{pmatrix}\begin{pmatrix}
m(t)& \g_{m(t)}  \\ 
0&m(t)^{-1}
\end{pmatrix}\begin{pmatrix}
b& \g_{b}  \\ 
0&b^{-1}
\end{pmatrix}^{-1}\begin{pmatrix}
m(t)& \g_{m(t)}  \\ 
0&m(t)^{-1}
\end{pmatrix}^{-1}
 $$
$$=\begin{pmatrix}
1& (1-m(t)^{2})b\g_{b}- (1-b^{2})m(t)\g_{m(t)}\\ 
0&1
\end{pmatrix} $$
implies that $(1-m(t)^{2})b\g_{b}- (1-b^{2})m(t)\g_{m(t)}\in A$.
 Since $(1-b^{2})m(t)\in M$, Lemma~\ref{2lem3} implies that
$$(1-b^{2})^{-1}m(t)^{-1}(1-m(t)^{2})b\g_{b}-\g_{m(t)}=\b(t)(m(t)-m(t)^{-1})-\g_{m(t)}\in A. $$
Therefore $\b(t)(m(t)-m(t)^{-1})\in \Gamma_{m(t)}$, because of Lemma \ref{2lem4}.
\end{proof}

\begin{proof}[Proof of Proposition \ref{2propo3}]
With Lemmas \ref{2lem4} and \ref{2lem5}, we find that
$$G \simeq  \left\{ \begin{pmatrix}
m(t)& \b(t)(m(t)-m(t)^{-1})+a(t) \\ 
0&m(t)^{-1}
\end{pmatrix}\hbox{, where } m(t)\in M, a(t)\in A \right\}.$$
If we change the fundamental solution, i.e, if we conjugate $G$ over $\rm GL_{2}(\mathcal{M}_{D})$, we can simplify the expression of $G$. After conjugation by the element $P=\begin{pmatrix}
1&\b(t)\\ 
0& 1
\end{pmatrix} $, we obtain that
$$PGP^{-1}\simeq   \left\{ \begin{pmatrix}
m(t)& a(t) \\ 
0&m(t)^{-1}
\end{pmatrix}\hbox{, where } m(t)\in M, a(t)\in A \right\}.$$
\end{proof}
We now want to compute $G$ when $\C^{*}\subset M$. The computation of $M$ has already been done in Lemma \ref{2lem1}. We are  now interested in the computation of $A$, which is a linear differential algebraic subgroup of $(\mathcal{M}_{D},+)$. Cassidy classifies the linear differential algebraic subgroups of the additive group in \cite{C72}, Lemma 11. We define $\mathcal{M}_{D}[y_{1}\dots,y_{\nu}]_{\dt}$, as the ring of linear homogeneous differential polynomials. There exists $P_{1},\dots P_{m}\in \mathcal{M}_{D}[y]_{\dt}$ such that $$A=\{ a(t)\in \mathcal{M}_{D} |P_{1}(a(t))=\dots =P_{m}(a(t))=0\}.$$
We recall that $g(z,t)\displaystyle \int_{u=0}^{z} g(u,t)^{-2}du$ is another solution. We can choose $\b(t)\in \mathcal{M}_{D}$ such that in the basis formed by the solutions $g(z,t)$ and $g(z,t)\displaystyle \int_{u=0}^{z} g(u,t)^{-2}du+\b(t)g(z,t)$, $G$ is equal to $\left\{ \begin{pmatrix}
m(t)& a(t) \\ 
0&m(t)^{-1}
\end{pmatrix}\hbox{, where } m(t)\in M, a(t)\in A \right\}$. 
Let $G^{g}\subset G$ be the subfield of elements that fix $g(z,t)$ and let $\s\in G^{g}$. Let $a(t)\in A$ be such that $$
\begin{array}{ll}
&\s \left(g(z,t)\displaystyle \int_{u=0}^{z} g(u,t)^{-2}du+\b(t)g(z,t)\right)\\\\
=&\left(g(z,t)\displaystyle \int_{u=0}^{z} g(u,t)^{-2}du+\b(t)g(z,t)\right)+a(t)g(z,t).
\end{array}
$$
Since $$\s \left(g(z,t)\displaystyle \int_{u=0}^{z} g(u,t)^{-2}du+\b(t)g(z,t)\right)=g(z,t)\left(\s\left(\displaystyle \int_{u=0}^{z} g(u,t)^{-2}du\right)+\b(t)\right),$$ we deduce that
$$\s\left(\displaystyle \int_{u=0}^{z} g(u,t)^{-2}du\right)-\displaystyle \int_{u=0}^{z} g(u,t)^{-2}du=a(t)\in A. $$
 Therefore, the differential polynomials $P_{i}$ satisfy $\forall \s \in G^{g}$:
$$\begin{array}{rcl}
\s \left(P_{i} \left( \int_{u=0}^{z} g(u,t)^{-2}du \right)\right)&=&P_{i} \left(\s \left(\int_{u=0}^{z} g(u,t)^{-2}du\right)\right)\\
&=&P_{i} \left( \int_{u=0}^{z} g(u,t)^{-2}du +a(t)\right)\\
&=&P_{i} \left( \int_{u=0}^{z} g(u,t)^{-2}du\right).
\end{array}$$
Since $P_{i}\left(\int_{u=0}^{z} g(u,t)^{-2}du\right)$ is fixed by the elements of $G^{g}$, we deduce by the Galois correspondence in the parameterized differential Galois theory (see \cite{CS}, Theorem~9.5) that
 $$P_{i}(a(t))=0 \Longleftrightarrow P_{i}\left(\int_{u=0}^{z} g(u,t)^{-2}du\right) \in \mathcal{M}_{D}(z)\langle g(z,t) \rangle_{\px,\dt},$$ 
where $\mathcal{M}_{D}(z)\langle g(z,t) \rangle_{\px,\dt}$ denotes the $(\px,\dt)-$differential field generated by $\mathcal{M}_{D}(z)$ and $g(z,t)$.
\pagebreak[3]
\subsection{} Let us consider the case number $2$. There exists a Liouvillian solution of the form $e^{\int_{0}^{z} f(u,t) du}$, 
such that $f(z,t)$ satisfies $$f(z,t)^{2}+a(z,t)f(z,t)+b(z,t)=0,$$ where $a(z,t),b(z,t)\in \mathcal{M}_{D}(z)$. There exists $\e \in \{ \pm 1 \}$ such that $$f(z,t)=\frac{-a(z,t) +\e\sqrt{a(z,t)^{2}-4b(z,t)}}{2}.$$ By computing the action of $G$ on $e^{\int_{0}^{z} \frac{-a(u,t) +\e\sqrt{a(u,t)^{2}-4b(u,t)}}{2} du}$,
we find that $e^{\int_{0}^{z} \frac{-a(u,t) -\e\sqrt{a(u,t)^{2}-4b(u,t)}}{2} du}$ is another Liouvillian solution which is linearly independent of the first one. 
By computing the action of $G$ on the second Liouvillian solution we find the existence of $M$, a linear differential algebraic subgroup of the multiplicative group $G_{m}$ such that, in the basis formed by the two Liouvillian solutions
$$G \simeq \left\{ \begin{pmatrix}
a(t) &0 \\ 
0&a^{-1}(t)
\end{pmatrix}\bigcup \begin{pmatrix}
0 &b^{-1}(t) \\ 
-b(t)&0
\end{pmatrix} \hbox{, where } a(t),b(t) \in M \right\}.$$
We are now interested in the computation of $M$. A direct computation shows that if there exists $\s \in G$ such that $\s \left(e^{\int_{0}^{z} f(u,t) du}\right)=\a(t)e^{\int_{0}^{z} f(u,t) du}$, then for all $i\leq n$, $\a(t)$ satisfies the parameterized differential equation 
$$ \partial_{t_{i}} \a(t)+\a(t) \left(\partial_{t_{i}} \int_{0}^{z} f(u,t) du\right)=\a(t)\s \left(\partial_{t_{i}} \int_{0}^{z} f(u,t) du\right).$$
Let $\widetilde{\partial_{t_{i}}}\a(t)=\frac{\partial_{t_{i}}\a(t)}{\a (t)}$ be the logarithm derivation. In \cite{C72}, Chapter $4$, we see that there exist $P_{1},\dots,P_{k}\in \mathcal{M}_{D}[y_{1}\dots,y_{n}]_{\dt}$ such that
$$M\simeq \left\{ \a(t) \Big| P_{1}\left(\widetilde{\partial_{t_{i}}}\a(t)\right)=\dots=P_{k}\left(\widetilde{\partial_{t_{i}}}\a(t)\right)=0\right\}. $$
The polynomial $P_{j}$ satisfies, for all $\s\in G$, $P_{j}\left(\partial_{t_{i}} \int_{0}^{z} f(u,t)du\right)=\s \left(P_{j}\left(\partial_{t_{i}} \int_{0}^{z} f(u,t)du\right)\right)$ and then $$P_{j}\left(\widetilde{\partial_{t_{i}}}\a(t)\right)=0 \Longleftrightarrow P_{j}\left( \partial_{t_{i}}\int_{0}^{z} f(u,t)du\right)\in \mathcal{M}_{D}(z).$$
\pagebreak[3]
\subsection{} In the third case, $G$ is finite, because whose Zariski closure is finite. Since all finite linear differential algebraic subgroups of $\rm SL_{2}(\mathcal{M}_{D})$ are finite linear algebraic subgroups of $\rm SL_{2}(\mathcal{M}_{D})$, $G$ is equal to the unparameterized differential Galois group. This is the same problem as in the unparameterized case. See \cite{HW} for the computation of $G$. 
\pagebreak[3]
\subsection{} We now consider the case where no Liouvillian solutions are found. We have seen in the introduction that in this case, the unparameterized differential Galois group is $\rm SL_{2}(\mathcal{M}_{D})$.  
Therefore, $G$ is Zariski dense in $\rm SL_{2}(\mathcal{M}_{D})$.
\par  The classification of the Zariski dense subgroup of $\rm SL_{2}(\mathcal{M}_{D})$ has been made in \cite{C72}, Proposition 42. Let $\textbf{D}$ be the $\mathcal{M}_{D}$-vectorial space of derivations of the form
$$\left\{\displaystyle \sum_{i=0}^{n} a_{i}(t) \partial_{t_{i}} \hbox{, where } a_{i}(t) \in \mathcal{M}_{D} \right\},$$ and $\mathbb{D}$ a vectorial subspace of $\textbf{D}$. Let $\mathcal{M}_{D}^{\mathbb{D}}$ be the elements of $\mathcal{M}_{D}$ that are constant for the derivations in $\mathbb{D}$. Remark that if $\mathbb{D}=\{0\}$, then $\mathcal{M}_{D}^{\mathbb{D}}=\mathcal{M}_{D}$. The linear differential algebraic subgroups of $\rm SL_{2}(\mathcal{M}_{D})$ that are Zariski dense in $\rm SL_{2}(\mathcal{M}_{D})$ are conjugated over $\rm SL_{2}(\mathcal{M}_{D})$ to the groups of the form $\rm SL_{2}(\mathcal{M}_{D}^{\mathbb{D}})$, with ${\mathbb{D}}$ a vectorial subspace of $\textbf{D}$.
\par  Let ${\mathbb{D}}\subset \textbf{D}$ be such that $G$ is conjugated over $\rm SL_{2}(\mathcal{M}_{D})$ to $\rm SL_{2}(\mathcal{M}_{D}^{\mathbb{D}})$. We want to compute explicitly ${\mathbb{D}}$. This leads us to the notion of integrable systems.
Let $A_{0}(z,t),\dots,A_{k}(z,t)$, $m \times m$ matrices with entries in $\mathcal{M}_{D}(z)$ and $ \partial'_{t_{1}},\dots, \partial'_{t_{k}}\in \textbf{D}$. The following system   
\[
[S]: \; \left \{
\begin{array}{ccl}
     \partial_{z} Y(z,t)&=&A_{0}(z,t)Y(z,t)  \\
   \partial'_{t_{1}} Y(z,t)&=&A_{1}(z,t)Y(z,t) \\
        & \vdots  &                        \\

   \partial'_{t_{k}} Y(z,t)&=&A_{k}(z,t)Y(z,t) \\
\end{array}
\right.
\]  is  integrable if and only if, for all $0\leq i,j \leq k$,
$$\partial'_{t_{j}}A_{i}(z,t)-\partial'_{t_{i}}A_{j}(z,t)=A_{j}(z,t)A_{i}(z,t)-A_{i}(z,t)A_{j}(z,t), $$ 
where $\partial'_{t_{0}}=\px$.
We recall here \cite{CS}, Proposition 6.3, which relates the integrable system and the parameterized differential Galois group in the case where the field of the $\px$-constants is differentially closed.
\pagebreak[3]
\begin{propo}\label{2propo1}Let $\{\partial'_{t_{1}},\dots,\partial'_{t_{k}}\}$ be a commuting basis of ${\mathbb{D}}$, a vectorial subspace of $\textbf{D}$. $G$ is conjugated to $\rm SL_{2}(\mathcal{M}_{D}^{\mathbb{D}})$  over $\rm SL_{2}(\mathcal{M}_{D})$
if and only if there exist $A_{1}(z,t),\dots ,A_{k}(z,t)$, $m \times m$ matrices with entries in $\mathcal{M}_{D}(z)$\footnote{Using the result of Seidenberg, we can identify the matrices as elements of $\rm GL_{2}(\mathcal{M}_{D}(z))$ because their entries involve a finite number of elements of the fields of the $\px$-constants.}, such that the following system is integrable:
 \[
[S]: \; \left \{
\begin{array}{ccl}
     \partial_{z} Y(z,t)&=&A(z,t)Y(z,t)  \\
   \partial'_{t_{1}} Y(z,t)&=&A_{1}(z,t)Y(z,t) \\
        & \vdots  &                        \\

   \partial'_{t_{k}} Y(z,t)&=&A_{k}(z,t)Y(z,t). \\
\end{array}
\right.
\]
\end{propo}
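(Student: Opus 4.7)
The plan is to pass between a fundamental solution $U$ of the parameterized Picard-Vessiot extension and the matrices $A_{j} := (\partial'_{t_{j}}U)U^{-1}$ of logarithmic derivatives, using the equations $\partial'_{t_{j}} U = A_{j} U$ to encode precisely the constraint that $G$ sits inside $\rm SL_{2}(\mathcal{M}_{D}^{\mathbb{D}})$. I would carry out the two implications separately.

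For the forward direction, I would first use the allowed $\rm SL_{2}(\mathcal{M}_{D})$-conjugation to arrange that $G$ equals (not merely is conjugated to) $\rm SL_{2}(\mathcal{M}_{D}^{\mathbb{D}})$ in a chosen fundamental solution $U$, and then set $A_{j} := (\partial'_{t_{j}}U)U^{-1}$. For $\s \in G$ and $M_{\s} := U^{-1}\s(U) \in \rm SL_{2}(\mathcal{M}_{D}^{\mathbb{D}})$, a short computation using $\partial'_{t_{j}} M_{\s} = 0$ (the entries of $M_{\s}$ being $\mathbb{D}$-constants) yields $\s(A_{j}) = A_{j}$, so by the parameterized Galois correspondence the entries of $A_{j}$ lie in $\mathcal{M}_{D}(z)$. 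The integrability identities then follow by applying the pairs $\partial'_{t_{i}}\partial'_{t_{j}}$ and $\partial_{z}\partial'_{t_{j}}$ to $U$ in either order and exploiting the commutativity of the derivations; for instance, from $\partial_{z}\partial'_{t_{j}}U = \partial'_{t_{j}}\partial_{z}U$ one reads off $\partial_{z}A_{j}-\partial'_{t_{j}}A = A\,A_{j}-A_{j}\,A$, and similarly for the parameter-parameter compatibilities.

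For the converse, starting from matrices $A_{j}$ making $[S]$ integrable, I would invoke the existence of a common fundamental solution $V$ of the enlarged system, which up to right-multiplication by an element of $\rm SL_{2}(\mathcal{M}_{D})$ can be placed inside the parameterized Picard-Vessiot extension $\mathcal{K}$; this right-multiplication is precisely the allowed conjugation of $G$. Taking $V$ as the new fundamental solution and applying $\s \in G$ to $\partial'_{t_{j}}V = A_{j} V$, using that $\s(A_{j}) = A_{j}$ because $A_{j}$ has entries in $\mathcal{M}_{D}(z)$, one extracts $\partial'_{t_{j}} M_{\s} = 0$, and hence $G \subseteq \rm SL_{2}(\mathcal{M}_{D}^{\mathbb{D}})$ after the conjugation.

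The main obstacle is upgrading this inclusion to the equality demanded by the statement. Here I would bring in Cassidy's classification of the Zariski dense subgroups of $\rm SL_{2}(\mathcal{M}_{D})$ recalled just above the proposition: each such subgroup is of the form $\rm SL_{2}(\mathcal{M}_{D}^{\mathbb{D}'})$ for a unique subspace $\mathbb{D}' \subseteq \textbf{D}$. In the setting of Case~$4$ of Kovacic, $G$ is Zariski dense in $\rm SL_{2}$, so $G$ is itself of this form, and the inclusion proved above forces $\mathbb{D} \subseteq \mathbb{D}'$; the hypothesis that $\mathbb{D}$ is the distinguished subspace attached to $G$ by the classification then pins down $\mathbb{D} = \mathbb{D}'$. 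The delicate bookkeeping throughout is to track the various fundamental solutions and the conjugations of $G$ they induce, and to justify that the fundamental solution of the integrable system can be chosen inside $\mathcal{K}$; both are standard consequences of the $\partial_{z}$-constants of $\mathcal{K}$ coinciding with $\mathcal{M}_{D}$.
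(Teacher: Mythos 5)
The paper does not actually prove this proposition: it is quoted from Cassidy--Singer (\cite{CS}, Proposition 6.3), so there is no in-paper argument to compare against. Your sketch reconstructs the standard proof of that result, and the core computations are right: if $M_{\s}=U^{-1}\s(U)$ has $\mathbb{D}$-constant entries then $\s(A_{j})=A_{j}$ for $A_{j}=(\partial'_{t_{j}}U)U^{-1}$, so the $A_{j}$ descend to $\mathcal{M}_{D}(z)$ by the Galois correspondence, and the cross-derivative identities give integrability; conversely, applying $\s$ to $\partial'_{t_{j}}V=A_{j}V$ extracts $\partial'_{t_{j}}M_{\s}=0$.

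Two steps are genuinely underjustified. First, placing a fundamental solution of the enlarged system inside $\mathcal{K}$ does not follow merely from the $\px$-constants of $\mathcal{K}$ being $\mathcal{M}_{D}$. That fact, combined with integrability (which makes $W_{j}:=\partial'_{t_{j}}U-A_{j}U$ another solution of $\px W=AW$), only gives $\partial'_{t_{j}}U=A_{j}U+UB_{j}$ with $B_{j}$ having constant entries; you must then solve the system $\partial'_{t_{j}}P=-B_{j}P$ for an invertible matrix of constants, and that step uses the differential closedness of the field of $\px$-constants (the universal field $C$; to land back in $\mathcal{M}_{D}$ one re-invokes Seidenberg, or Frobenius on a smaller polydisc, which also requires the commutativity of the basis). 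Second, and more seriously, your treatment of the equality is circular: the statement places no hypothesis on $\mathbb{D}$ beyond being a subspace of $\textbf{D}$ with a commuting basis, so there is no ``hypothesis that $\mathbb{D}$ is the distinguished subspace attached to $G$'' to invoke. Indeed, with equality the ``if'' direction is false for non-maximal $\mathbb{D}$: for $\mathbb{D}=\{0\}$ the system $[S]$ is vacuously integrable and $\rm SL_{2}(\mathcal{M}_{D}^{\mathbb{D}})=\rm SL_{2}(\mathcal{M}_{D})$, yet $G$ need not be all of $\rm SL_{2}(\mathcal{M}_{D})$ (see the first example of Section 3). The cited source states ``conjugate to a \emph{subgroup} of'' $\rm SL_{2}(\mathcal{M}_{D}^{\mathbb{D}})$, and the paper only ever applies the proposition together with the maximality of $\mathbb{D}$ in the argument leading to Proposition \ref{2propo2}. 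Your inclusion argument therefore proves the correct (subgroup) version; the equality cannot be proved as stated, and the honest conclusion is that the statement needs the weaker formulation or the maximality hypothesis made explicit.
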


We want to give simpler necessary and sufficient conditions for the integrability of the system in Proposition \ref{2propo1}.
First, we will write a necessary and sufficient condition for the integrability of 
 \[
 [S']: \; \left \{
\begin{array}{ccl}
     \partial_{z} Y(z,t)&=&A(z,t)Y(z,t)   \\
   \partial' Y(z,t)&=&A'(z,t)Y(z,t), \\
\end{array}
\right.
\]
where $A'(z,t)=\begin{pmatrix}
a(z,t) & b(z,t) \\ 
c(z,t) & d(z,t)
\end{pmatrix}$ is an $m \times m$ matrix with entries in $\mathcal{M}_{D}(z)$ and ${\partial' \in \textbf{D}}$. The fact that $[S']$ is integrable is equivalent to the solution in $(\mathcal{M}_{D}(z))^{4}$ of the parameterized differential system:\\ 
$\begin{array}{ll}
&  
 \left \{
\begin{array}{ccl}
     \px a(z,t)&=&c(z,t)-b(z,t)r(z,t)   \\
   \px b(z,t)&=&d(z,t)-a(z,t) \\
   \px c(z,t)&=&(a(z,t)-d(z,t))r(z,t)+\partial ' r(z,t) \\
    \px d(z,t)&=&b(z,t)r(z,t)-c(z,t) \\
\end{array}
\right. \\
& \\
\Longleftrightarrow &\left \{
\begin{array}{ccl}
     \px a(z,t)&=&-\px d(z,t)   \\
   \px^{2} b(z,t)&=&2\px d(z,t) \\
   \px c(z,t)&=&-\px b(z,t) r(z,t)+\partial ' r(z,t) \\
    \frac{\px^{2} b(z,t)}{2}&=&b(z,t)r(z,t)-c(z,t) \\
\end{array}
\right.\\
& \\
\Longleftrightarrow &\left \{
\begin{array}{ccl}
      \px a(z,t)&=&-\px d(z,t)   \\
   \px^{2} b(z,t)&=&2\px d(z,t) \\
   \px c(z,t)&=&-\px b(z,t) r(z,t)+\partial ' r(z,t) \\
 \frac{\px^{3} b(z,t)}{2}&=&2\px b(z,t)r(z,t)+b(z,t)\px r(z,t)-\partial 'r(z,t).
\end{array}
\right.\\\\
\end{array}$ \\
We can easily see that the existence of $b(z,t)\in \mathcal{M}_{D}(z)$ as a solution of 
$$\frac{\px^{3} b(z,t)}{2}=2\px b(z,t)r(z,t)+b(z,t)\px r(z,t)-\partial 'r(z,t)$$
is equivalent to the fact that the system $[S']$ is integrable.
There exists an algorithm to determine if such a system has a solution (see \cite{VdPS}, p. 100). We obtain a necessary and sufficient condition on $\partial '$ for the integrability condition of the system $[S']$. Let $\mathbb{D}$ be the maximal vectorial subspace of $\textbf{D}$ such that for all derivations  $\partial '$ in $\mathbb{D}$, there exists $A'(z,t)$, $m \times m$ matrix with entries in $\mathcal{M}_{D}(z)$ such that the following system is integrable:
 \[
 [S']: \; \left \{
\begin{array}{ccl}
     \partial_{z} Y(z,t)&=&A(z,t)Y(z,t)   \\
   \partial' Y(z,t)&=&A'(z,t)Y(z,t). \\
\end{array}
\right.
\]
 We want to prove that the parameterized differential Galois group of ${\partial_{z} Y(z,t)=A(z,t)Y(z,t)}$ is conjugated to $\rm SL_{2}(\mathcal{M}_{D}^{\mathbb{D}})$ over $\rm SL_{2}(\mathcal{M}_{D})$. Assume that this is not the case. Then by Proposition \ref{2propo1}, there exists $\mathbb{D}_{1},\mathbb{D}_{2}\subsetneq \mathbb{D}$, having at least dimension $1$, with  $\mathbb{D}_{1} \neq \mathbb{D}_{2}$ such that $G$ is conjugated to $\rm SL_{2}(\mathcal{M}_{D}^{\mathbb{D}_{1}})$ and $\rm SL_{2}(\mathcal{M}_{D}^{\mathbb{D}_{2}})$. In this case, $\rm SL_{2}(\mathcal{M}_{D}^{\mathbb{D}_{1}})$ is conjugated to $\rm SL_{2}(\mathcal{M}_{D}^{\mathbb{D}_{2}})$ over $\rm SL_{2}(\mathcal{M}_{D})$. The fact that $\mathbb{D}_{1}=\mathbb{D}_{2}$ is proved in \cite{Sit}, Theorem 1.2, Chapter 2, but we will recall the proof here. Let $\a \in \mathcal{M}_{D}^{\mathbb{D}_{1}}$ and consider the diagonal matrix $M=\begin{pmatrix}
\a & 0 \\ 
0 &\a^{-1} 
\end{pmatrix}\in \rm SL_{2}(\mathcal{M}_{D}^{\mathbb{D}_{1}})$. Since similar matrices have the same set of eigenvalues and $\mathcal{M}_{D}^{\mathbb{D}_{2}}$ is algebraically closed , we obtain that $\a(t) \in \mathcal{M}_{D}^{\mathbb{D}_{2}}$. Therefore  $\mathcal{M}_{D}^{\mathbb{D}_{1}} \subset \mathcal{M}_{D}^{\mathbb{D}_{2}}$ and, by symmetry, $\mathcal{M}_{D}^{\mathbb{D}_{1}}=\mathcal{M}_{D}^{\mathbb{D}_{2}}$. We then deduce $\mathbb{D}_{1}=\mathbb{D}_{2}= \mathbb{D}$. We have proved:
\pagebreak[3]
\begin{propo}\label{2propo2}
We have the following equivalences:
\begin{trivlist}
\item (1) $G$ is conjugated to $\rm SL_{2}(\mathcal{M}_{D}^{\mathbb{D}})$ over $\rm SL_{2}(\mathcal{M}_{D})$.
\item (2) For all $\partial '$ that belongs in a commuting basis of $\mathbb{D}$, the following parameterized differential equation has a solution in $\mathcal{M}_{D}(z)$:
$$\frac{\px^{3} b(z,t)}{2}=2\px b(z,t)r(z,t)+b(z,t)\px r(z,t)-\partial 'r(z,t).$$
\item (3) For all $\partial '\in \mathbb{D}$, the following parameterized differential equation has a solution in $\mathcal{M}_{D}(z)$:
$$\frac{\px^{3} b(z,t)}{2}=2\px b(z,t)r(z,t)+b(z,t)\px r(z,t)-\partial 'r(z,t).$$
\end{trivlist}
\end{propo}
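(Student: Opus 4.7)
My plan is to deduce the three-way equivalence by first reducing pair-integrability of the auxiliary system $[S']$ to existence of a PDE solution, and then invoking Proposition \ref{2propo1} together with a uniqueness argument to bridge from pairwise conditions to the full multi-derivation system.

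The implication (3) $\Rightarrow$ (2) is immediate, since any commuting basis of $\mathbb{D}$ is contained in $\mathbb{D}$. For the equivalence between integrability of $[S']$ for a fixed $\partial'$ and existence of a PDE solution, I would carry out the routine calculation already sketched in the text preceding the statement: writing $A'(z,t)=\begin{pmatrix} a & b \\ c & d \end{pmatrix}$ and expanding the compatibility relation $\partial' A - \partial_z A' = A'A-AA'$ produces the four scalar equations listed there, and eliminating $a$, $c$, $d$ in favor of $b$ and its $\partial_z$-derivatives collapses the system to the displayed third-order PDE in $b$. For (1) $\Rightarrow$ (3), given $\partial'\in\mathbb{D}$ I would extend it to a commuting basis of $\mathbb{D}$, apply Proposition \ref{2propo1} to obtain an integrable full system, and extract the $(\partial_z,\partial')$-component, whose $(1,2)$-entry is the desired PDE solution; for a general $\partial'=\sum c_j\partial'_{t_j}$ with $c_j\in\mathcal{M}_D$ (so $\partial_z c_j=0$), the matrix $\sum c_j A_j$ realises pair-integrability, and the same conclusion follows.

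The main implication is (2) $\Rightarrow$ (1), and the difficulty is that pairwise $\partial_z$-compatibility does not by itself furnish the cross-compatibilities $\partial'_{t_j}A_i-\partial'_{t_i}A_j=A_jA_i-A_iA_j$ between distinct $A_i$'s that are needed for a full multi-derivation integrable system. The strategy is indirect, through the Galois group. By the classification of Zariski-dense linear differential algebraic subgroups of $\mathrm{SL}_2(\mathcal{M}_D)$, there is a subspace $\mathbb{D}^*\subset\textbf{D}$ with $G$ conjugated over $\mathrm{SL}_2(\mathcal{M}_D)$ to $\mathrm{SL}_2(\mathcal{M}_D^{\mathbb{D}^*})$. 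Applying the already-proven (1) $\Rightarrow$ (3) to $\mathbb{D}^*$, every element of $\mathbb{D}^*$ satisfies the PDE condition, so $\mathbb{D}^*\subset\mathbb{D}$ by maximality. If $\mathbb{D}^*\subsetneq\mathbb{D}$, I pick $\partial'\in\mathbb{D}\setminus\mathbb{D}^*$; the PDE solution given by (2) produces, through the matrix calculation read backwards together with the reverse direction of Proposition \ref{2propo1} applied to the $1$-dimensional subspace $\mathbb{D}':=\mathcal{M}_D\cdot\partial'$, a second conjugacy $G\simeq\mathrm{SL}_2(\mathcal{M}_D^{\mathbb{D}'})$. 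The hard part is ruling this out via the uniqueness lemma of \cite{Sit}, Theorem 1.2: conjugating the diagonal matrix $\mathrm{diag}(\alpha,\alpha^{-1})\in\mathrm{SL}_2(\mathcal{M}_D^{\mathbb{D}^*})$ into $\mathrm{SL}_2(\mathcal{M}_D^{\mathbb{D}'})$ preserves its eigenvalues, and since $\mathcal{M}_D^{\mathbb{D}'}$ is algebraically closed, both $\alpha$ and $\alpha^{-1}$ lie in it; by symmetry $\mathcal{M}_D^{\mathbb{D}^*}=\mathcal{M}_D^{\mathbb{D}'}$, whence $\mathbb{D}^*=\mathbb{D}'$, contradicting $\partial'\notin\mathbb{D}^*$.
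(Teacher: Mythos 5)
Your proof is correct and follows essentially the same route as the paper: the same reduction of pair-integrability of $[S']$ to the third-order scalar equation in $b$, the same appeal to Proposition \ref{2propo1} and to Cassidy's classification of Zariski-dense subgroups of $\mathrm{SL}_{2}(\mathcal{M}_{D})$, and the same uniqueness argument via the eigenvalue/algebraic-closure trick from \cite{Sit}. You merely make explicit the contradiction structure that the paper leaves compressed (its two subspaces $\mathbb{D}_{1}\neq\mathbb{D}_{2}$ are your $\mathbb{D}^{*}$ and $\mathbb{D}'$).
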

\pagebreak[3]
\begin{rem}\label{2rem1}
In the case where $n=1$, i.e, there is only one parameter, the Zariski dense subgroups of $\rm SL_{2}(\mathcal{M}_{D})$ are (up to conjugation over $\rm SL_{2}(\mathcal{M}_{D})$) $\rm SL_{2}(\mathcal{M}_{D})$ and $\rm SL_{2}(\C)$. Then we only have to check whether 
$$ \frac{\px^{3} b(z,t)}{2}=2\px b(z,t)r(z,t)+b(z,t)\px r(z,t)-\partial_{t}r(z,t), $$
has a solution in  $\mathcal{M}_{D}(z)$.
\end{rem}

\pagebreak
\subsection{} We summarize in the next theorem the results of this section.

\begin{theo}\label{2theo} Let us consider $\px^{2} Y(z,t)=r(z,t)Y(z,t)$ with $r(z,t)\in \mathcal{M}_{D}(z)$ and let $G$ be the parameterized differential Galois group, seen as a linear differential algebraic subgroup of $\rm SL_{2}(\mathcal{M}_{D})$. There are four possibilities:
\begin{enumerate}
\item There exists a Liouvillian solution of the form  $g(z,t)=e^{\int_{0}^{z} f(u,t) du}$, with $f(z,t)$ belongs to $\mathcal{M}_{D}(z)$. There are two possibilities:
\begin{enumerate}
\item If $g(z,t)\in \mathcal{M}_{D}$, then we can compute explicitly another solution $g(z,t)\displaystyle \int_{u=0}^{z}g(u,t)^{-2}du$ which is linearly independent with $g(z,t)$. In this basis of solutions we can compute explicitly $G$.
\item In the other case, $G$ is conjugated to:
$$ \left\{ \begin{pmatrix}
m(t)& a(t) \\ 
0&m(t)^{-1}
\end{pmatrix}\hbox{, where } m(t)\in M, a(t)\in A \right\},$$
where:
\end{enumerate}
$$\begin{array}{ll}
M&=\{ g(z,t)^{-1}\s  (g(z,t)), \s\in G \},\\\\
A&=\left\{a(t)\in \mathcal{M}_{D}\Bigg| 
\begin{array}{l}
\forall P \in \mathcal{M}_{D}[y]_{\dt}, \\P \left(\int_{u=0}^{z} g(u,t)^{-2}du\right) \in \mathcal{M}_{D}(z)\langle g(z,t)\rangle_{\px,\dt} \Longleftrightarrow P(a(t))=0
\end{array}\right\}.
\end{array}$$
\item  There exists a Liouvillian solution of the form  $g(z,t)=e^{\int_{0}^{z} f(u,t) du}$, where $f(z,t)$ is algebraic over  $\mathcal{M}_{D}(z)$ of degree two  and $f(z,t) \notin \mathcal{M}_{D}(z)$. In this case, $G$ is conjugated to $$\left\{ \begin{pmatrix}
a &0 \\ 
0&a^{-1}
\end{pmatrix} \bigcup \begin{pmatrix}
0 &b^{-1} \\ 
-b&0
\end{pmatrix}\hbox{, where } a,b\in M \right\}, \hbox{ where}$$
$$M=\left\{f(t)\in \mathcal{M}_{D}\Bigg|
\begin{array}{l}
\forall P \in \mathcal{M}_{D}[y_{1}\dots,y_{n}]_{\dt},\\
 P \left(\partial_{t_{i}}\int_{u=0}^{z} f(u,t)du\right) \in \mathcal{M}_{D}(z) \Leftrightarrow P\left(\widetilde{\partial}_{t_{i}}f(t)\right)=0
\end{array} \right\}.$$

\item $G$ is finite. In this case, $G$ is equal to the unparameterized differential Galois group.
\item There are no Liouvillian solutions. In this case, there exists $\mathbb{D}$, a $\mathcal{M}_{D}$-vectorial space of derivations spanned by $\D_{t}$, such that $G$ is conjugated to $\rm SL_{2}(\mathcal{M}_{D}^{\mathbb{D}})$. Moreover, $\pt '\in \mathbb{D}$ if and only if the following parameterized differential equation has a solution in $\mathcal{M}_{D}(z)$:
$$\frac{\px^{3} b(z,t)}{2}=2\px b(z,t)r(z,t)+b(z,t)\px r(z,t)-\pt 'r(z).$$
\end{enumerate}
\end{theo}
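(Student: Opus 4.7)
The plan is to assemble the theorem as a synthesis of the four case analyses carried out earlier in the section, so the proof is essentially a cross-referenced dispatch rather than a new argument. First I would invoke the Seidenberg-based setup together with Proposition 6.26 of \cite{HS}, which asserts that the Zariski closure of $G$ in $\rm SL_{2}(\mathcal{M}_{D})$ is the unparameterized differential Galois group. Running the classical Kovacic algorithm on $\partial_{z}^{2}Y = r(z,t)Y$ (legitimate because $\mathcal{M}_{D}$ is algebraically closed by virtue of $C$ being $\D$-differentially closed and the Seidenberg identification) then forces the Zariski closure to fall into one of the four standard cases, and hence $G$ itself lies as a Zariski dense subgroup of the corresponding algebraic group. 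Each of the four items of the theorem is obtained by then listing the Zariski dense subgroups in that specific algebraic group.

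For item (1), I would split on whether $\C^{*}\subset M$. When $\C^{*}\not\subset M$, Lemma \ref{2lem1} shows $g(z,t)\in\mathcal{M}_{D}$, so the reduction-of-order formula $g(z,t)\int_{0}^{z}g(u,t)^{-2}du$ supplies an explicit independent solution and $G$ becomes readable directly as a matrix group on this basis. When $\C^{*}\subset M$, Proposition \ref{2propo3} identifies $G$ up to conjugation with the triangular group built from $M$ and $A$; the description of $M$ as $\{g^{-1}\sigma(g):\sigma\in G\}$ is immediate, and the description of $A$ by the Galois correspondence (\cite{CS}, Theorem 9.5) is exactly the computation performed just after the proof of Proposition \ref{2propo3}, characterizing $A$ via the differential polynomials $P_{i}$ that kill $\int g^{-2}du$ modulo $\mathcal{M}_{D}(z)\langle g(z,t)\rangle_{\px,\dt}$.

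For item (2), I would point to the action computation preceding the statement: the Galois conjugate of $e^{\int f\,du}$ by the nontrivial element of the quadratic extension gives the second linearly independent Liouvillian solution, so that $G$ is forced into the dihedral-type block form. Cassidy's classification of the $\D_{t}$-differential algebraic subgroups of $G_{m}$ (\cite{C72}, Chapter 4) combined with the differential equation satisfied by the multiplier $\alpha(t)$ under the $\partial_{t_{i}}$ then yields the stated characterization of $M$ via the polynomials $P_{j}$ in the logarithmic derivatives $\widetilde{\partial}_{t_{i}}\alpha(t)$. Item (3) is immediate: a finite Zariski closure means $G$ is already finite, and any finite linear differential algebraic subgroup of $\rm SL_{2}(\mathcal{M}_{D})$ is a finite linear algebraic subgroup, so $G$ coincides with the unparameterized Galois group.

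The main content is item (4), which I expect to be the real obstacle only insofar as it relies on the most substantial earlier result. Here I would cite the classification from \cite{C72}, Proposition 42, of Zariski dense subgroups of $\rm SL_{2}(\mathcal{M}_{D})$ as precisely the groups $\rm SL_{2}(\mathcal{M}_{D}^{\mathbb{D}})$ for vectorial subspaces $\mathbb{D}\subset\textbf{D}$. Then Proposition \ref{2propo2}, proved just above, supplies the equivalence between membership in $\mathbb{D}$ and solvability of the third-order equation
\[
\tfrac{1}{2}\partial_{z}^{3}b(z,t) = 2\partial_{z}b(z,t)\,r(z,t) + b(z,t)\,\partial_{z}r(z,t) - \partial'r(z,t)
\]
in $\mathcal{M}_{D}(z)$, which is exactly the statement of item (4). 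No further work is required beyond collecting these pieces.
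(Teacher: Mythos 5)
Your proposal matches the paper's own treatment: Theorem \ref{2theo} is explicitly presented as a summary of Section 2, and the intended proof is precisely the dispatch you describe --- Proposition 6.26 of \cite{HS} plus Kovacic's case split, then Lemma \ref{2lem1} and Proposition \ref{2propo3} with the Galois-correspondence computation of $A$ for case (1), the action computation and Cassidy's classification of subgroups of $G_{m}$ for case (2), the finiteness observation for case (3), and \cite{C72}, Proposition 42 together with Proposition \ref{2propo2} for case (4). One small imprecision: Kovacic's algorithm is run over $C(z)$ with $C$ the universal (hence algebraically closed) field of $\px$-constants, and Seidenberg's theorem is invoked only afterwards to realize the finitely many relevant elements as meromorphic functions --- $\mathcal{M}_{D}$ itself is not algebraically closed.
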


Notice that the computation of the Liouvillian solutions and the unparameterized differential Galois group are already known. Our results compute the parameterized differential Galois group in the cases 1,2 and 4. The classification of the Zariski dense linear differential algebraic subgroup of $\rm SL_{2}(\mathcal{M}_{D})$ and the link with integrable systems were already known (see \cite{C72,CS}), but we give here an effective way to compute the Galois group in the case number 4 and we decrease the number of integrability conditions.
\pagebreak
\section{Examples}

In the following examples, we will consider equations having coefficients in $\mathcal{M}_{D}(z)$ and we will compute $G$ as a linear differential algebraic subgroup of $\rm SL_{2}(\mathcal{M}_{D})$. In the three first examples, we are in the case where no Liouvillian solutions are found. In the fourth example, we are in the case number $1$ and in the last example, we are in the case number~$2$.
\pagebreak[3]
\begin{ex}[Schrodinger equation with rational potential of odd degree]
Let us consider $r(z,t)=z^{2n+1}+\displaystyle \sum_{i=0}^{2n} t_{i}z^{i}$. There are no Liouvillian solutions. The parameterized linear differential equation
$$\frac{\px^{3} b(z,t)}{2}=2\px b(z,t)r(z,t)+b(z,t)\px r(z,t)-\displaystyle \sum_{i=0}^{2n} a_{i}(t)z^{i}$$ has a rational solution if and only if there exists $c(t)\in \mathcal{M}_{D}$ such that
 \[
 \; \left \{
\begin{array}{ccccc}
    & a_{2n}(t)&=&c(t)(2n+1)   \\
 i<2n: & a_{i}(t)&=&c(t)(i+1)t_{i+1}. \\ 
\end{array} 
\right. 
\] 
Then $$G\simeq \rm SL_{2}\left(\mathcal{M}_{D}^{\partial_{t'}}\right), \hbox{ where }\partial_{t'}=(2n+1)\partial_{t_{2n}} +\displaystyle \sum_{i=0}^{2n-1} (i+1)t_{i+1}\partial_{t_{i}} .$$
\end{ex}
\pagebreak[3]
\begin{ex}[Bessel equation]
Let $r(z,t)=\frac{4t^{2}-1}{4z^{2}}-1$. 
In \cite{Kov}, $\S 4.2$, Example 2, we see that if $t\notin \frac{1}{2}+\Z$, this parameterized linear differential equation has no Liouvillian solution. We can choose $D$ such that $\{D\cap (\frac{1}{2}+\Z )\}= \varnothing$. We obtain that $G$ is Zariski dense in $\rm SL_{2}(\mathcal{M}_{D})$. With Remark \ref{2rem1}, we have to see whether the parameterized linear differential equation
\begin{equation}\label{2eq3}
\frac{\px^{3} b(z,t)}{2}=2\px b(z,t)\left(\frac{4t^{2}-1}{4z^{2}}-1\right)+b(z,t)\frac{1-4t^{2}}{2z^{3}}-\frac{2t}{z}
\end{equation}
has a solution in $\mathcal{M}_{D}(z)$. Suppose that there exists $b(z,t)\in \mathcal{M}_{D}(z)$ satisfying such an equation. We can see directly that if $b(z,t)$ has a pole, then it is $z=0$. Assume that $b(z,t)$ has a pole of order $\nu$ at $z=0$ and let $0\neq f(t)\in \mathcal{M}_{D}$ equal the value at $(0,t)$ of $z^{\nu}b(z,t)$. Since $b(z,t)$ satisfies the equation (\ref{2eq3}), we find for all $t\in D$:
$$\frac{-f(t)\nu(\nu-1)(\nu-2)}{2}=-f(t)\nu\frac{4t^{2}-1}{2}+f(t)\frac{1-4t^{2}}{2}. $$
For all $\nu$, there is no $0\neq f(t)$ satisfying this equality and we find that $b(z,t)\in \mathcal{M}_{D}[z]$. Let $\nu$ be its degree and $f(t)$ its leading term. The equation (\ref{2eq3}) has no constant solution, and we can assume $\nu>1$. We find that for all $t\in D$,
$$0=-2\nu f(t),  $$ 
which implies that the equation (\ref{2eq3}) has no solutions  in $\mathcal{M}_{D}(z)$ and then
$$G\simeq \rm SL_{2}(\mathcal{M}_{D}).$$
\end{ex}
\pagebreak[3]
\begin{ex}[Harmonic oscillator]
Let $r(z,t)=\frac{z^{2}}{4}+t$. There are no Liouvillian solutions. With Remark \ref{2rem1}, we have to check whether the parameterized linear differential equation
$$\frac{\px^{3} b(z,t)}{2}=2\px b(z,t)\left(\frac{z^{2}}{4}+t\right)+b(z,t)\frac{z}{2}-1$$ has a solution in $\mathcal{M}_{D}(z)$. We can see directly that if $b(z,t)\in \mathcal{M}_{D}(z)$ is a solution, then it has no poles, which means that $b(z,t)\in \mathcal{M}_{D}[z]$. Let $\nu$ be its degree and $0\neq f(t)$ be its leading term. We find that $\frac{(\nu+1)f(t)}{2}=0$, which admits no solution different from $0$. Then $$G\simeq \rm SL_{2}(\mathcal{M}_{D}).$$
\end{ex}
\pagebreak[3]
 \begin{ex}
If $r(z,t)=\frac{t}{z^{2}}$, then we have two Liouvillian solutions
$$f_{1}(z,t)=\sqrt{z}z^{\frac{\sqrt{1+4t}}{2}}  \hbox{ and } f_{2}(z,t)=\sqrt{z}z^{-\frac{\sqrt{1+4t}}{2}}.$$
We can compute the parameterized differential Galois group for the fundamental solution  $$\begin{pmatrix}
f_{1}(z,t) &f_{2}(z,t) \\ 
\px f_{1}(z,t) & \px f_{2}(z,t)
\end{pmatrix}:$$
$$G \simeq \left\{ \begin{pmatrix}
\a e^{a (\sqrt{1+4t})} &0 \\ 
0&\a^{-1} e^{-a (\sqrt{1+4t})} 
\end{pmatrix}\hbox{, where } a\in \C \; , \a \in \C^{*} \right\}.$$ Viewed as a linear differential algebraic subgroup $\rm GL_{2}(\mathcal{M}_{D})$, $$G\simeq  \left\{ \begin{pmatrix}
\a (t) &0 \\ 
0&\a^{-1}(t)
\end{pmatrix}\hbox{, where } \pt\left(\frac{\sqrt{1+4t} \pt \a (t)}{\a (t)}\right) =0 \right\}.$$

\end{ex}
\pagebreak[3]
\begin{ex}
If $r(z,t)=\frac{t}{z}-\frac{3}{16z^{2}}$, then we have two Liouvillian solutions
$$f_{1}(z,t)=(z)^{1/4}e^{2(tz)^{1/2}} \hbox{ and } f_{2}(z,t)=(z)^{1/4}e^{-2(tz)^{1/2}}.$$
We can compute the parameterized differential Galois group for the fundamental solution  $$\begin{pmatrix}
f_{1}(z,t) &f_{2}(z,t) \\ 
\px f_{1}(z,t) & \px f_{2}(z,t)
\end{pmatrix}:$$
$$G\simeq \left\{ \begin{pmatrix}
a(t) &0 \\ 
0&a^{-1}(t)
\end{pmatrix}\bigcup \begin{pmatrix}
0 &b^{-1}(t) \\ 
-b(t)&0
\end{pmatrix}\hbox{, where } a(t),b(t) \in \C^{*} \right\}.$$
We can remark that we have an integrable system
\[
\; \left \{
\begin{array}{ccl}
     \px Y(z,t)&=&A(z,t)Y(z,t)   \\
   \partial_{t}Y(z,t)&=&B(z,t)Y(z,t) \\
\end{array}
\right.
\] 
with
$$ A(z,t)=\begin{pmatrix}
0 & 1 \\ \\
\frac{t}{z}-\frac{3}{16z^{2}} & 0
\end{pmatrix} \hbox{ and } B(z,t)=\begin{pmatrix}
-\frac{1}{4t} & \frac{z}{t} \\ \\
1-\frac{3}{16tz} & \frac{3}{4t} 
\end{pmatrix}.  $$
\end{ex}
\fussy
\pagebreak[3]
\nocite{A,AMW}
\bibliographystyle{alpha}
\bibliography{C:/Users/thomas.dreyfus/Dropbox/Maths/biblio}

\end{document}